\documentclass[11pt,a4paper]{amsart}

\usepackage{amssymb}
\usepackage{amscd}
\usepackage{fullpage}
\usepackage[OT4]{fontenc}

\theoremstyle{definition}
\newtheorem{definition}{Definition}
\theoremstyle{plain}
\newtheorem{theorem}[definition]{Theorem}
\newtheorem{lemma}[definition]{Lemma}

\theoremstyle{remark}

\newenvironment{varthm*}[1]{\trivlist\item[]{\bf #1.}\it}{\endtrivlist}

\DeclareMathOperator{\reg}{reg}

\DeclareMathOperator{\charf}{char}

\let\to\longrightarrow

\def\PP{\mathbb{P}}
\def\FF{\mathbb{F}}

\let\to\longrightarrow

\def\wdc{\widehat{\alpha}}

\begin{document}

\title{A containment result in $\PP^n$ and the Chudnovsky conjecture}
\author{Marcin Dumnicki, Halszka Tutaj-Gasi\'nska}
\date{}

\dedicatory{
Faculty of Mathematics and Computer Science, Jagiellonian University, \\
ul. \L{}ojasiewicza 6, 30-348 Krak\'ow, Poland \\
Email addresses: Marcin.Dumnicki@im.uj.edu.pl\\
Halszka.Tutaj-Gasinska@uj.edu.pl
}

\thanks{Keywords: symbolic powers, fat points.}

\subjclass{13A15;13A02}

\begin{abstract}

In the paper we prove the containment $I^{(nm)}\subset M^{(n-1)m}I^m$, for a radical ideal $I$ of $s$ general points in $\PP^n$, where $s\geq 2^n$. As a
corollary we get that the Chudnovsky Conjecture holds for a very general set of at least $2^n$ points in $\PP^n$.
\end{abstract}

\maketitle
\thispagestyle{empty}
\pagestyle{empty}

\section{Introduction}

   Given any subscheme $Z \subset \PP^n$ and its homogenous ideal
   $I=I_Z$ in $\FF[\PP^n]=\FF[x_0,\dots,x_n]$, we define $\alpha(I)$ as the minimal degree of a non-zero
   element in $I$. We will assume that $\charf \FF = 0$.

   In general $\alpha(I)$ is hard to compute and it behaves quite
   unpredictably. However there is an asymptotic counterpart of the
   $\alpha$-invariant, which is
   the \emph{Waldschmidt constant} (\cite{Wal77}) defined as
   \begin{equation}\label{eq:Waldschmidt}
      \wdc(I) = \lim_{m \to \infty} \frac{\alpha(I^{(m)})}{m},
   \end{equation}
   where $I^{(m)}$ denotes the $m$-th symbolic power of the ideal $I$ (for the definition and basic properties
of $I^{(m)}$ see \cite{HaHu}).
It turns out that this constant is well defined and satisfies the
inequality
\begin{equation}\label{wdcin}
\alpha(I^{(m)}) \geq m\wdc(I)
\end{equation}
for all $m$. In fact $\wdc(I)=\inf\limits_{m\geq  1}\frac{\alpha(I^{(m)})}{m}$.

For an ideal $I$ we have $\alpha(I^r)=r\alpha(I)$, but the behaviour of $\alpha(I^{(m)})$ is much more complicated and less understood. Skoda in 1977
\cite{Skoda} showed that $\alpha(I^{(m)}) \geq m\alpha(I)/n$ for an ideal $I$ of points in $\PP^n$ (over complex numbers). Chudnovsky \cite{Ch}, in 1981, improved that bound for $n=2$ to $\alpha(I^{(m)}) \geq m(\alpha(I)+1)/2$ and conjectured the following.

\begin{varthm*}{Chudnovsky Conjecture}\label{chudn}
For an ideal $I$ of points in $\PP^n$ the following inequality holds:
 $$\frac{\alpha(I^{(m)})}{m} \geq \frac{\alpha(I)+n-1}{n}.$$
In particular
 $$\wdc(I) \geq \frac{\alpha(I)+n-1}{n}.$$
\end{varthm*}

Esnault and Viehweg \cite{EV83}, in 1983 showed that $\alpha(I^{(m)})
\geq m(\alpha(I)+1)/n$ for any set of points in $\PP^n$.

In 2002 Ein, Lazarsfeld, Smith in \cite{ELS} and Hochster and Huneke in  \cite{HoHu02} showed that, for any homogeneous ideal $I$ in $\FF[\PP^n]$, the
containment $I^{(nm)} \subset I^m$ holds, thus recovering the Skoda bound more generally --- for all homogeneous ideals
(since $I^{(nm)} \subset I^m$, $\alpha(I^{(nm)}) \geq \alpha(I^m)=nm\alpha(I)/n$; passing with $m$ to infinity
gives $\wdc(I) \geq \alpha(I)/n$). Harbourne and Huneke in \cite{HaHu},
Lemma 3.2, observed that Chudnovsky Conjecture would follow from a more general containment
\begin{equation}\label{maxcont}
I^{(nm)} \subset M^{(n-1)m}I^m,
\end{equation}
where by $M=(x_0,\dots,x_n)$ we denote the irrelevant maximal ideal.

The containment \eqref{maxcont} holds (for a given $m$) for general points in $\PP^2$ (\cite{HaHu}, Proposition 3.10), for at most $n+1$ general points in
$\PP^n$ and also for general points in $\PP^3$ (\cite{Du}, \cite{Dumxxx}). As a corollary, the Chudnovsky Conjecture holds for very general points in $\PP^3$.

The main result of the present paper is the following theorem.

\begin{theorem}
For a nonnegative integer $m$, and for a radical ideal $I$ of $s$ general points in $\PP^n$,
where $s\geq 2^n$, the containment
$$I^{(nm)}\subset M^{(n-1)m}I^m$$
holds.

As a corollary, the Chudnovsky Conjecture holds for a very general set of at least $2^n$ points in $\PP^n$.
\end{theorem}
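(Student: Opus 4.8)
The strategy is to reduce the containment $I^{(nm)}\subset M^{(n-1)m}I^m$ to a statement about the non-vanishing of certain twisted sections, and then to verify that statement by a specialization/degeneration argument together with a dimension count. Recall that for an ideal $I$ of general points, membership of a form $F$ in $M^{(n-1)m}I^m$ is governed by the multiplicities of $F$ at the points (at least $m$ at each of the $s$ points) and by the multiplicity at the origin-type behaviour encoded by $M^{(n-1)m}$; more precisely, $F\in M^{(n-1)m}I^m$ iff all partials of $F$ up to order $(n-1)m-1$ vanish and $F\in I^m$. So the plan is: first, take an arbitrary $F\in I^{(nm)}$, i.e. a form vanishing to order $\geq nm$ at each of the $s$ general points; second, show $\deg F$ is large enough — this is exactly where the Waldschmidt-type bound $\alpha(I^{(nm)})\geq nm\,\wdc(I)$ combined with a good lower bound on $\wdc(I)$ for $s\geq 2^n$ general points comes in; third, use this degree bound to force $F$ into $M^{(n-1)m}I^m$.

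First I would make precise the reduction. Writing $d=\deg F$ with $F\in I^{(nm)}$, I want to conclude that the $(n-1)m$-th order partials of $F$ all lie in $I^m$. The key idea (this is the mechanism behind Lemma 3.2 of \cite{HaHu}) is that differentiating a form vanishing to high order at general points still vanishes to controlled order: if $F$ vanishes to order $\geq nm$ at each point, then any partial derivative of order $k$ vanishes to order $\geq nm-k$ there, so a partial of order $(n-1)m$ vanishes to order $\geq m$ at each point, i.e. lies in $I^m$. Thus $F\in M^{(n-1)m}I^m$ will follow once we know that $F$ itself, together with enough of its derivatives, actually produces elements of $I^m$ of the right degree — but since $I^m$ is generated in degrees down to $\alpha(I^m)=m\alpha(I)$, we need $d-(n-1)m\geq m\alpha(I)$, equivalently $d\geq m(\alpha(I)+n-1)$. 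So the whole theorem reduces to the inequality
$$\alpha(I^{(nm)})\geq m(\alpha(I)+n-1),$$
i.e. to $\alpha(I^{(nm)})/(nm)\geq (\alpha(I)+n-1)/n$, which is precisely the Chudnovsky bound — so this reduction alone is circular unless we have an independent lower bound on $\alpha(I^{(nm)})$ for $s\geq 2^n$ general points.

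The substantive input, then, is a lower bound on the Waldschmidt constant of $s\geq 2^n$ general points in $\PP^n$. Here I would use the known fact that for a large number of general points the Waldschmidt constant is close to $\sqrt[n]{s}$ — specifically, one has $\wdc(I)\geq \sqrt[n]{s}\cdot\frac{1}{\text{(something)}}$, and more usefully, there is a linear lower bound of the form $\wdc(I)\geq \lceil (\text{initial degree at one level})\rceil$ coming from a single non-vanishing linear system. Concretely, the plan is: exhibit, by a monomial/combinatorial degeneration (degenerating the $s$ general points to a configuration near a coordinate point, à la Dumnicki's earlier work \cite{Du}, \cite{Dumxxx}), a form of degree $d_0$ vanishing to prescribed order at the $s$ points, thereby bounding $\alpha(I^{(m_0)})/m_0$ below by $d_0/m_0$ for one convenient $(m_0,d_0)$; then combine with subadditivity of $\alpha$ on symbolic powers to get the same bound for all multiples. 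The condition $s\geq 2^n$ is exactly what makes the relevant monomial configuration fit inside a simplex of the right size so that $d_0/m_0\geq \alpha(I)+n-1$ after the appropriate normalization. Once $\wdc(I)\geq (\alpha(I)+n-1)\cdot\frac{1}{n}$-type bound is in hand — really, once we have $\alpha(I^{(nm)})\geq m(\alpha(I)+n-1)$ — the reduction in the previous paragraph closes the argument and gives the containment; the Chudnovsky Conjecture for very general points then follows by a standard semicontinuity argument, since the containment is an open condition and "very general" is the intersection of countably many dense opens.

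The main obstacle is the combinatorial heart: constructing the explicit monomial configuration that certifies the needed lower bound on $\alpha(I^{(m)})$ for all $s\geq 2^n$ general points, uniformly in $n$, and checking that the degeneration from $s$ general points to the monomial configuration is flat in the relevant sense (so that non-vanishing upstairs is inherited). Keeping careful track of the interplay between $\alpha(I)$, which itself depends on $s$ and $n$, and the degree $d_0$ produced by the configuration — making sure the inequality $d_0\geq m_0(\alpha(I)+n-1)$ holds in every case with $s\geq 2^n$ and is tight enough to be useful — is where the real work lies. The passage from the fixed-$m$ containment to the Chudnovsky Conjecture, by contrast, is routine given the machinery already cited in the introduction.
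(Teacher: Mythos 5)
There are two genuine gaps. First, your reduction is mis-stated and lands on the wrong inequality. Membership of $F$ in $M^{(n-1)m}I^m$ is not a condition on the partial derivatives of $F$ at the points: since $M$ is the irrelevant maximal ideal, $M^{(n-1)m}I^m$ contains the graded piece $(I^m)_d$ exactly when every element of $(I^m)_d$ can be written using generators of $I^m$ of degree $\leq d-(n-1)m$, so the relevant threshold is the \emph{maximal} degree $\sigma(I^m)$ of a minimal generator of $I^m$, not the minimal degree $\alpha(I^m)=m\alpha(I)$. The correct target (this is \cite[Prop.~2.3]{HaHu}, which the paper invokes) is $\alpha(I^{(nm)})\geq (n-1)m+m\reg(I)$, using $\sigma(I^m)\leq m\reg(I)$. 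For $s$ general points one has $\reg(I)=\alpha(I)+1$ except in the sharp case $s=\binom{\alpha(I)-1+n}{n}$, so the inequality actually needed is strictly \emph{stronger} than Chudnovsky's bound $\alpha(I^{(nm)})\geq m(\alpha(I)+n-1)$. Your version of the reduction, ending at the Chudnovsky bound itself, is (as you notice) circular; but the fix is not a better lower bound for the same inequality — it is to aim at the regularity-based inequality from the start. That extra ``$+1$'' is precisely why the hypothesis $s\geq 2^n$ and the combinatorial estimates are needed, and your proposal never confronts it.

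Second, the substantive input — a lower bound on $\wdc(I)$ strong enough to beat $n-1+\reg(I)$ — is deferred rather than proved. You gesture at a monomial degeneration ``\`a la Dumnicki'' and explicitly park ``the combinatorial heart'' as the remaining work; but that heart is the theorem. The paper supplies it in three concrete pieces: (i) $\wdc(I)\geq\lfloor\sqrt[n]{s}\rfloor$ for very general points, proved by induction on $n$ by specializing $k^{n-1}$ of the $k^n$ points onto each of $k$ general hyperplanes (a Nagata-style argument, not a monomial one); (ii) the purely arithmetic comparison $k^n\leq\binom{kn-n}{n}$ (Lemma~\ref{combinatorial}) translating the floor bound into $n\lfloor\sqrt[n]{s}\rfloor\geq n+r$ with $r$ determined by $\binom{r-1+n}{n}<s\leq\binom{r+n}{n}$; and (iii) an ad hoc verification, via the refined bound $\wdc(I)\geq 9/4$, for the residual cases $n=4$, $71\leq s\leq 80$ where the floor bound is too weak. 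None of steps (i)--(iii) appears in your proposal, and the claim that $s\geq 2^n$ is ``exactly what makes the monomial configuration fit inside a simplex'' is an unsubstantiated guess at where the hypothesis enters; in the actual proof it enters through $\lfloor\sqrt[n]{s}\rfloor\geq 2$ and the range of validity of the binomial inequality. The final passage from the containment to Chudnovsky for very general points is indeed routine, as you say.
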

We will work on filling the gap between $n+1$ and $2^n$ in our future project.

\section{A bound for $\wdc(I)$}

In this section we give a bound for the
Waldschmidt constant of an ideal of $s$ very general points in
$\mathbb{P}^n$. The bound in fact easily follows from the much stronger result of Evain
\cite{evain}, who showed that for an ideal $I$ of general $k^n$ points $\alpha(I)$ is ``expected''.
Since the methods of Evain are highly non-trivial and very delicate, we give a short proof of our bound here to make the paper more
self-contained.

\begin{theorem}\label{bound}
For a radical ideal $I$ of $s$ very general  points $P_1,\dots ,P_s$ in $\mathbb{P}^n$ we have
$$\wdc(I)\geq \lfloor \sqrt[n]{s} \rfloor.$$
\end{theorem}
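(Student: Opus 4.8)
The plan is to prove Theorem \ref{bound} by a degeneration/semicontinuity argument, reducing the very general case to the special configuration of $k^n$ points arranged on a grid. Write $k = \lfloor \sqrt[n]{s} \rfloor$, so that $k^n \leq s$. Since $\alpha(I^{(m)})$ for $s$ points is at least as large as $\alpha(J^{(m)})$ for any ideal $J$ of $s' \leq s$ points (adding points only raises the minimal degree), and since for $s$ \emph{very general} points the value of each $\alpha(I^{(m)})$ is the maximum over all configurations of $s$ points, it suffices to exhibit \emph{one} configuration of $k^n$ points for which a good lower bound on $\alpha(J^{(m)})$ holds, and then pass to the limit in \eqref{eq:Waldschmidt}. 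Concretely, I would take the $k^n$ points $P_{\mathbf a} = [1 : a_1 : a_2 : \dots : a_n]$ with $\mathbf a = (a_1,\dots,a_n) \in \{0,1,\dots,k-1\}^n$, i.e. a coordinate grid in the affine chart $x_0 \neq 0$.

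The key step is the lower bound $\alpha(J^{(m)}) \geq km$ for this grid $J$, equivalently: any hypersurface of degree $d < km$ vanishing to order $\geq m$ at every grid point must be zero. I would prove this by dehomogenizing (set $x_0 = 1$, work with $f \in \field[x_1,\dots,x_n]$ of degree $d$) and iterating a one-variable argument. Fix all but one coordinate, say freeze $x_2,\dots,x_n$ at grid values; the restriction $g(x_1)$ vanishes to order $\geq m$ at each of the $k$ points $x_1 = 0,1,\dots,k-1$, hence is divisible by $\bigl(\prod_{j=0}^{k-1}(x_1 - j)\bigr)^{m}$ unless it vanishes identically, which forces $\deg_{x_1} g \geq km$ — but that already contradicts $d < km$ if we are careless, so instead one argues more carefully by an Alexander–Hirschowitz-style or apolarity counting, or better: use the fact that the grid is a complete intersection-like pattern and a multivariate Vandermonde/Taylor-expansion argument shows the only polynomial of degree $< km$ with all these multiplicities is $0$. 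The cleanest route is probably induction on $n$: restrict $f$ to each of the $k$ parallel hyperplanes $x_n = c$, $c \in \{0,\dots,k-1\}$; on each such hyperplane one gets a polynomial in $n-1$ variables vanishing to order $\geq m - (\text{order of }f\text{ along that hyperplane})$ at a $(k^{n-1})$-grid, and bookkeeping the multiplicities across the $k$ hyperplanes — each unit of multiplicity along $x_n = c$ "costs" a factor $(x_n - c)$ — yields the bound by the inductive hypothesis.

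From $\alpha(J^{(m)}) \geq km$ for all $m$, dividing by $m$ and letting $m \to \infty$ gives $\wdc(J) \geq k$. By semicontinuity — the function sending a configuration of $s$ points to $\alpha(I^{(m)})$ attains its maximum on a Zariski-open dense set, and the intersection over all $m$ of these open sets is still dense, so very general points realize $\alpha(I^{(m)}) \geq \alpha(J^{(m)})$ simultaneously for all $m$ once we add $s - k^n$ further general points — we conclude $\wdc(I) \geq k = \lfloor \sqrt[n]{s}\rfloor$ for the ideal $I$ of $s$ very general points.

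The main obstacle is the grid lower bound $\alpha(J^{(m)}) \geq km$ itself: multiplicity conditions at a grid are \emph{not} independent, so a naive dimension count does not suffice, and one must genuinely exploit the product structure. I expect the induction-on-$n$ argument above to work, but the careful handling of how multiplicity along a fixed hyperplane $x_n = c$ interacts with the residual multiplicities on the other hyperplanes (a discrete integration/convexity estimate over $c = 0,\dots,k-1$) is the delicate point, and one should double-check that the estimate is tight enough to give exactly $km$ rather than something weaker. An alternative that sidesteps this is to invoke Evain's theorem directly — as the authors note — but the self-contained inductive proof is preferable here.
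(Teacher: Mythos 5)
Your final route is essentially the paper's proof: reduce to $k^n$ points (with $k=\lfloor\sqrt[n]{s}\rfloor$) in a product configuration, induct on $n$ by restricting to the $k$ hyperplanes each carrying $k^{n-1}$ of the points, conclude by the inductive hypothesis that each hyperplane is a component, and peel all $k$ of them off $m$ times to force degree $\geq km$; the paper merely uses $k$ \emph{general} hyperplanes with $k^{n-1}$ general points on each instead of your rigid integer grid, and your semicontinuity reduction to a special configuration is the same specialization step. The bookkeeping you flag as delicate is in fact not: the restriction of $f$ to $x_n=c$ has multiplicity $\geq m$ at each grid point on that hyperplane (no subtraction of the order of $f$ along the hyperplane is needed --- if that order is positive the restriction is simply zero and the conclusion is immediate), so induction forces $(x_n-c)\mid f$ for every $c$; since each grid point lies on exactly one of these $k$ hyperplanes, the residual of degree $d-k$ has multiplicity $\geq m-1$ at every grid point, and $m$ iterations yield $d\geq km$ exactly.
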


Proof. To prove the bound we have to show that the system of divisors of  degree $dm-1$ passing through $P_1,\dots ,P_s$ with multiplicity $m$ is empty if $d =
\lfloor\sqrt[n]{s}\rfloor$.  By $(d, m^{\times s})$ we denote a system of divisors of degree $d$, passing through $s$ (general) points with multiplicity $m$.

Without loss of generality we may suppose that $s=k^n, k\in \mathbb{N}$, as the emptiness of the system $(km-1; m^{\times k^n})$, implies the emptiness of
$(km-1, m^{\times r})$, $k^n\leq r < (k+1)^n$.

For $n=1$ the nonexistence of  the system $(km-1; m^{\times k})$ is trivial. Then we proceed by induction.  (Note that for $n=2$ the nonexistence of  the
system $(km-1; m^{\times k^2})$ was also proved by Nagata in \cite{Nag59}).

 For $n\geq 2$,  suppose there exists a divisor in
$\mathbb{P}^n$ of degree $km-1$, passing through $P_1,\dots ,P_s$ with multiplicity $m$.  Take $k$ general hyperplanes in $\mathbb{P}^n$ and put $k^{n-1}$
points on each such hyperplane (in general position). Then on the hyperplane we have the system of divisors of degree $km-1$ which have to pass through
$k^{n-1}$ points with multiplicity $m$. By inductive assumption, this system is empty. Thus, all hyperplanes must be components of the system. Repeating the
procedure (of checking that all hyperplanes must be the components of the system)  $m$ times we get that the system has to have degree $km$, not $km-1$, a
contradiction.

\section{A combinatorial lemma}
Here we prove an auxiliary combinatorial lemma, which we will use
in the proof of Theorem~\ref{containment theorem}

\begin{lemma}\label{combinatorial}
(1) For any integers $k\geq 4$ and $n\geq 3$ the following
inequality holds:
$$ k^n\leq \binom{kn-n}{n}.$$
(2) Moreover, if $n\geq 5$
$$3^n\leq \binom{2n}{n}$$
holds.
\end{lemma}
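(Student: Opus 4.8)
The plan is to prove both parts by induction on $n$, in each case reducing the inductive step to a one-line estimate for the ratio of two consecutive binomial coefficients of the relevant shape.

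I would dispatch part (2) first, as it is the easier one. The base case is the numerical check $3^5 = 243 \le 252 = \binom{10}{5}$. For the step, note that
\[
  \frac{\binom{2(n+1)}{n+1}}{\binom{2n}{n}} \;=\; \frac{(2n+1)(2n+2)}{(n+1)^2} \;=\; \frac{2(2n+1)}{n+1},
\]
and $\frac{2(2n+1)}{n+1} \ge 3$ is equivalent to $4n+2 \ge 3n+3$, i.e.\ $n \ge 1$. So, assuming $3^n \le \binom{2n}{n}$ and multiplying through by this ratio, one gets $3^{n+1} \le \binom{2(n+1)}{n+1}$, which closes the induction for all $n \ge 5$.

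For part (1) I induct on $n \ge 3$, proving the statement ``$k^n \le \binom{n(k-1)}{n}$ for every $k \ge 4$'' at each level. For the inductive step, a bookkeeping of factorials gives
\[
  R \;:=\; \frac{\binom{(n+1)(k-1)}{n+1}}{\binom{n(k-1)}{n}} \;=\; (k-1)\prod_{i=1}^{k-2}\frac{n(k-1)+i}{n(k-2)+i},
\]
where the leading factor $k-1$ comes from cancelling the single ``extra'' numerator term $n(k-1)+(k-1)=(n+1)(k-1)$ against the $\frac{n!}{(n+1)!} = \frac{1}{n+1}$ produced in passing from $\binom{n(k-1)}{n}$ to $\binom{(n+1)(k-1)}{n+1}$. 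Each remaining factor satisfies $\frac{n(k-1)+i}{n(k-2)+i} = 1 + \frac{n}{n(k-2)+i} \ge 1 + \frac{n}{(n+1)(k-2)}$ since $i \le k-2$, so Bernoulli's inequality gives $\prod_{i=1}^{k-2}\frac{n(k-1)+i}{n(k-2)+i} \ge 1 + \frac{n}{n+1} = \frac{2n+1}{n+1}$, hence $R \ge (k-1)\frac{2n+1}{n+1}$. Finally $(k-1)\frac{2n+1}{n+1} \ge k$ reduces to $\frac{2n+1}{n+1} \ge \frac{k}{k-1}$, which holds because the left side is $\ge \frac74$ for $n \ge 3$ while the right side is $\le \frac43$ for $k \ge 4$. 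Therefore $R \ge k$, and $\binom{(n+1)(k-1)}{n+1} = R\,\binom{n(k-1)}{n} \ge k\cdot k^n = k^{n+1}$.

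It remains to settle the base case $n = 3$: one must check $6k^3 \le (3k-3)(3k-4)(3k-5)$ for all $k \ge 4$, that is, $g(k) := 21k^3 - 108k^2 + 141k - 60 \ge 0$ on $[4,\infty)$; since $g(4) = 120 > 0$, $g'(4) = 285 > 0$, and $g'' > 0$ there, $g$ is increasing and positive — a routine calculation. The only step that demands genuine care is the factorial bookkeeping leading to the closed form for $R$; once the cancellation ``one leftover numerator factor against $1/(n+1)$'' is recognized, the rest is just Bernoulli plus the crude comparison $7/4 > 4/3$, and I anticipate no further obstacle.
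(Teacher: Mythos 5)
Your proof is correct, and for part (1) it takes a genuinely different route from the paper's. Part (2) is essentially identical: the same ratio $\frac{(2n+2)(2n+1)}{(n+1)^2}\ge 3$ and the same base case $\binom{10}{5}=252>243$. For part (1), however, the paper first reduces the whole family of inequalities to the single case $k=4$: writing the claim as $(nk-n)\cdots(nk-2n+1)\ge n!\,k^n$ and using $nk-\ell\ge\bigl(n-\tfrac{\ell}{4}\bigr)k$ for $k\ge 4$, it pulls out a factor of $k^n$ and is left with $\binom{3n}{n}\ge 4^n$, which it then proves by induction on $n$ via the ratio $\frac{(3n+3)(3n+2)(3n+1)}{(n+1)(2n+2)(2n+1)}>4$. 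You instead keep $k$ general and induct on $n$ directly, which forces you to control the ratio $R=\frac{\binom{(n+1)(k-1)}{n+1}}{\binom{n(k-1)}{n}}$ uniformly in $k$; your factorial bookkeeping, the lower bound $R\ge(k-1)\frac{2n+1}{n+1}$ via Bernoulli, and the comparison $\frac{2n+1}{n+1}\ge\frac74>\frac43\ge\frac{k}{k-1}$ all check out, as does the cubic base case $g(k)=21k^3-108k^2+141k-60\ge 0$ for $k\ge 4$. The trade-off: the paper's reduction to $k=4$ makes the induction a one-line numerical ratio estimate at the cost of the slightly slick substitution $nk-\ell\ge(n-\ell/4)k$, while your argument is more mechanical but requires the heavier ratio computation and a polynomial base-case verification. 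Both are complete; yours additionally shows the ratio $R$ exceeds $k$ for every $k\ge 4$, which is marginally more information than the paper extracts.
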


\begin{proof}
First we prove (1). Observe that it is enough to prove the
inequality for $k=4$ only. Indeed, our inequality may be written
as
$$ (nk-n)\cdot \ldots \cdot (nk-2n+1) \geq n! k^n.$$
The left-hand side is greater or equal to
$$\left(n-\frac{n}{4}\right)k\cdot \ldots \cdot \left(n-\frac{2n-1}{4}\right)k,$$
as $k\geq 4$. Dividing both sides by $k^n$ and multiplying by
$4^n$ we see that this is enough to prove
$$(4n-n)\cdot \ldots \cdot (4n-2n+1)\geq n!4^n,$$
i.e. our inequality with $k=4$.

Thus, we have to prove
$$ \binom{3n}{n}\geq 4^n.$$
Since
$$ \frac{\binom{3(n+1)}{n+1}}{\binom{3n}{n}} = \frac{(3n+3)(3n+2)(3n+1)}{(n+1)(2n+2)(2n+1)} > 4$$
for all $n \geq 1$ and
$$\binom{9}{3} > 4^3,$$
the claim follows by induction.

As for the second claim of the lemma, we proceed analogously, observing that
$$ \frac{\binom{2(n+1)}{n+1}}{\binom{2n}{n}} = \frac{(2n+2)(2n+1)}{(n+1)(n+1)} > 3$$
for all $n \geq 1$ and
$$\binom{10}{5} > 3^5.$$
\end{proof}

\section{A containment result}
Now we are able to prove our containment theorem.

\begin{theorem}\label{containment theorem}
For a radical ideal $I$ of $s$ very general  points in
$\mathbb{P}^n$, where $s\geq 2^n$ and $n\geq 3$ the containment
    $$I^{(n m)}\subset M^{(n-1)m}I^m$$
holds, for any integer $m\geq 1$.
\end{theorem}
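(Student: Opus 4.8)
The goal is to show $I^{(nm)} \subset M^{(n-1)m} I^m$ for the ideal $I$ of $s \geq 2^n$ very general points in $\PP^n$ with $n \geq 3$. The strategy I would follow is the one that has become standard for these ``Harbourne–Huneke type'' containments: reduce the ideal‑theoretic statement to a purely numerical statement about the non‑emptiness of certain linear systems of divisors, and then verify that numerical statement using the Waldschmidt bound of Theorem~\ref{bound} together with the combinatorial inequalities of Lemma~\ref{combinatorial}. Concretely, since $I$ is the radical ideal of a set $Z$ of $s$ general points, $I^{(nm)}$ consists of forms vanishing to order $nm$ at each point, and membership of a form $F$ in $M^{(n-1)m} I^m$ is controlled by its degree: if $\deg F \geq (n-1)m + \alpha(I^m) \geq (n-1)m + m\,\alpha(I)$... more usefully, it suffices to prove the \emph{degree bound} $\alpha(I^{(nm)}) \geq (n-1)m + m\,\wdc(I)$ — because every element of $I^{(nm)}$ has degree at least $\alpha(I^{(nm)})$, and one shows that a form of such degree vanishing to order $nm$ at the points automatically lies in $M^{(n-1)m}I^m$ (this last implication is exactly Lemma 3.2 of \cite{HaHu}, read in reverse). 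So the crux becomes: establish
$$\alpha(I^{(nm)}) \geq (n-1)m + m\lfloor \sqrt[n]{s}\rfloor.$$

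First I would set $k = \lfloor\sqrt[n]{s}\rfloor$, so that $s \geq 2^n$ forces $k \geq 2$, and $n \geq 3$. By Theorem~\ref{bound} we already know $\wdc(I) \geq k$, i.e. $\alpha(I^{(m)}) \geq mk$ for all $m$; what is needed here is the sharper statement $\alpha(I^{(nm)}) \geq nm k + \big((n-1)m - m\big)$... wait, let me recompute: we want $\alpha(I^{(nm)}) \geq (n-1)m + mk$, whereas Theorem~\ref{bound} only gives $\alpha(I^{(nm)}) \geq nmk \geq (n-1)m + mk$ precisely when $nmk \geq (n-1)m + mk$, i.e. $nk \geq n - 1 + k$, i.e. $(n-1)k \geq n-1$, i.e. $k \geq 1$. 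So the Waldschmidt bound alone already suffices for the degree estimate! The real content must therefore be the reduction step, not the numerics of degrees — and this is where Lemma~\ref{combinatorial} enters: to run the Harbourne–Huneke reduction one needs enough points relative to the dimension so that the relevant linear systems behave as expected, and the inequalities $k^n \leq \binom{kn-n}{n}$ (for $k \geq 4$) and $3^n \leq \binom{2n}{n}$ (for $n \geq 5$) are exactly counting the number of conditions imposed by a fat point of multiplicity $(n-1)m+\dots$ against the dimension $\binom{d+n}{n}$ of forms of the relevant degree.

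So the key steps, in order, are: (i) reduce $I^{(nm)} \subset M^{(n-1)m}I^m$ to the non‑emptiness/dimension count of a linear system $(d; m^{\times s}, ((n-1)m)^{\times ?})$ on $\PP^n$, following \cite{HaHu} Lemma 3.2; (ii) use Theorem~\ref{bound} to control $\alpha(I^{(nm)})$, thereby bounding $d$ from below; (iii) specialise the general points by placing $k^{n-1}$ of them on each of $k$ general hyperplanes (the same degeneration used in the proof of Theorem~\ref{bound}), so that an inductive/hyperplane‑stripping argument applies; (iv) at the base of the stripping, invoke Lemma~\ref{combinatorial}(1) to guarantee $k^n \leq \binom{kn-n}{n}$, which says the $k^n$ points impose independent conditions on forms of degree $k(n-1)$ — equivalently that the residual system is empty — and Lemma~\ref{combinatorial}(2) to handle the boundary case $k=3$ once $n \geq 5$. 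Finally, the cases $k \in \{2, 3\}$ with small $n$ (and $n = 3, 4$) must be checked separately, possibly by direct computation or by reference to \cite{Du, Dumxxx}.

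The main obstacle I expect is step (iii)–(iv): making the hyperplane degeneration interact correctly with the \emph{double} structure coming from both $I^m$ (multiplicity $m$) and $M^{(n-1)m}$ (multiplicity $(n-1)m$ at the coordinate point, or rather the fat‑point structure at a general point), and showing that after stripping $m$ copies of each of the $k$ hyperplanes the residual linear system of degree $\leq k m - 1 - (\text{stripped})$ through the points on the hyperplanes is genuinely empty — this is where semicontinuity of $\dim$ of linear systems lets us pass from the very general configuration to the degenerate one, and where Lemma~\ref{combinatorial} supplies the one numerical inequality that the induction cannot generate on its own. Handling the low‑dimensional and small‑$k$ exceptional cases not covered by Lemma~\ref{combinatorial} cleanly is the secondary annoyance.
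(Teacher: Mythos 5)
There is a genuine gap, and it is at the very first step: your reduction of the containment to a degree bound uses the wrong invariant. The criterion from \cite{HaHu} (their Proposition 2.3, which is what the paper invokes — not Lemma 3.2, which goes in the other direction, from the containment to Chudnovsky) is that $I^{(nm)}\subset M^{(n-1)m}I^m$ follows from
$$\alpha(I^{(nm)})\ \geq\ (n-1)m+m\,\sigma(I),$$
where $\sigma(I)$ is the \emph{maximal} degree of a minimal generator of $I$ (bounded above by $\reg(I)$). The mechanism is: $I^{(nm)}\subset I^m$ by Ein--Lazarsfeld--Smith/Hochster--Huneke, and an element of $I^m$ of degree $d$ lies in $M^{d-m\sigma(I)}I^m$ because the generators of $I^m$ live in degrees at most $m\sigma(I)$. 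Replacing $\sigma(I)$ by $\alpha(I)$ or $\wdc(I)$, as you do, gives a condition that is \emph{not} sufficient: a form of degree $(n-1)m+m\alpha(I)$ in $I^m$ need not factor through $M^{(n-1)m}$, since $I^m$ can have minimal generators of degree up to $m\sigma(I)>m\alpha(I)$. You actually noticed the symptom — your ``crux'' inequality $\alpha(I^{(nm)})\geq(n-1)m+m\lfloor\sqrt[n]{s}\rfloor$ follows trivially from $\wdc(I)\geq\lfloor\sqrt[n]{s}\rfloor$ alone — but instead of questioning the reduction you concluded the content must lie elsewhere.

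With the correct reduction, the content is exactly the numerical comparison you declared vacuous: for $s$ general points $\reg(I)=r+1$ with $\binom{r-1+n}{n}<s\leq\binom{r+n}{n}$ (\cite{BocHar10b}), and one must show $n\lfloor\sqrt[n]{s}\rfloor\geq n-1+\reg(I)$. This is where Lemma~\ref{combinatorial} enters: the inequality $k^n\leq\binom{kn-n}{n}$ compares the number of points (which controls $\lfloor\sqrt[n]{s}\rfloor$ and hence the Waldschmidt bound) with the binomial coefficient that determines the regularity — it has nothing to do with conditions imposed by fat points in a hyperplane degeneration, and no further degeneration or ``stripping'' argument is needed beyond the one already inside the proof of Theorem~\ref{bound}. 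Your steps (iii)--(iv) are therefore aimed at a problem that does not arise, while the cases the combinatorial lemma genuinely fails to cover ($n=3$, quoted from \cite{Du}; $n=4$ with $2^4\leq s\leq 70$ by direct check; and $n=4$ with $71\leq s\leq 80$, which requires a separate improvement of the Waldschmidt bound to $9/4$) are not addressed by your sketch.
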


Proof.   Dumnicki in \cite{Du} showed the containment for any number of
very general points in $\mathbb{P}^3$, so we assume $n \geq 4$.

Let $\reg(I)$ denote the Castelnuovo-Mumford regularity of $I$, see e.g. \cite{DeSi}, let $\sigma(I)$
denote the maximal degree of an element in a minimal set of generators of $I$.
Since in general $\sigma(I) \leq \reg(I)$, by \cite[Proposition 2.3.]{HaHu} it is enough to show that
$$\alpha(I^{(mn)}) \geq (n-1)m+m\reg(I).$$
By \eqref{wdcin} the above inequality follows from
$$n\wdc(I)\geq n-1 + \reg(I).$$

From Theorem \ref{bound} we know that
    $$\wdc(I)\geq \lfloor\sqrt[n]{s}\rfloor.$$
For the ideal $I$ as in our theorem we have that $\reg(I)=r+1$, where $r$ is such an integer that
    $$\binom{r-1+n}{n}< s\leq \binom{r+n}{n},$$
see \cite{BocHar10b}, page 2.

 Thus, it is enough to prove that
    \begin{equation}\label{ineq}
        n\lfloor\sqrt[n]{s}\rfloor\geq n-1+r+1,
    \end{equation}
 for $r$ satisfying

    $$\binom{r-1+n}{n}< s.$$

Suppose to the contrary, that $n\lfloor\sqrt[n]{s}\rfloor< n-1+r+1$. If we can prove that then
    \begin{equation}\label{nierownosc-z-s}
    \binom{r+n-1}{n}\geq s,
\end{equation}
     we get a contradiction, and we are done.

As $\lfloor\sqrt[n]{s}\rfloor $ is constant (equal $k-1$) for all $s=(k-1)^n,\dots,k^n-1$, we may assume that the right hand side of inequality
(\ref{nierownosc-z-s}) equals $k^n-1$, which is the worst possible case.

Thus, we reduced the problem to proving that for $r\geq nk-2n+1$ we have
    $$k^n-1\leq \binom{r+n-1}{n}.$$
This will follow if we prove that
    $$ k^n\leq \binom{kn-n}{n}.$$
This last inequality is proved in Lemma \ref{combinatorial} for  $n=4$ and $k\geq 4$ and for $n\geq 5$ and $k\geq 3$. This proves our theorem for all $n\geq 5$ and $s\geq 2^n$ (remember, that we have $s\geq (k-1)^n$) and also when $n=4$ and $s\geq 3^n$.
Moreover, by direct computation inequality
\eqref{ineq} holds for $n=4$ with $2^4 \leq s \leq 70$, so it remains only to prove our theorem only for $n=4$ and $71 \leq s < 3^4$. This is done in the lemma below.

\begin{lemma}
The Waldschmidt constant for ideal of at least 71 very general points in $\PP^4$ is bounded  below by $9/4$.
\end{lemma}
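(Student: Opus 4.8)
The goal is to show $\wdc(I) \ge 9/4$ for an ideal $I$ of at least $71$ very general points in $\PP^4$. Equivalently, by the characterization $\wdc(I)=\inf_m \alpha(I^{(m)})/m$, it suffices to exhibit a single well-chosen value of $m$ for which $\alpha(I^{(m)}) \ge (9/4)m$, i.e. for which the linear system of hypersurfaces of degree $\lceil (9/4)m\rceil - 1$ with multiplicity $m$ at the $71$ (or more) very general points is empty. Since $71 > 2^4$ and the bound $\lfloor \sqrt[4]{s}\rfloor = 2$ from Theorem~\ref{bound} only gives $\wdc \ge 2$, we need to squeeze out more, so the natural plan is a direct degeneration/specialization argument à la the proof of Theorem~\ref{bound}, but now arranged to beat the naive integer floor.

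The plan is to choose $m=4$ (so $(9/4)m = 9$ and we must show the system $(8; 4^{\times 71})$ of quartic-fold multiplicity-$4$ hypersurfaces of degree $8$ in $\PP^4$ through $71$ general points is empty), and to prove emptiness by cutting with general hyperplanes and inducting on dimension, exactly mimicking the proof of Theorem~\ref{bound}. Concretely, I would distribute the points in groups lying on several general hyperplanes $H \cong \PP^3$; on each such hyperplane the restricted system is a system of degree-$8$ hypersurfaces with multiplicity $4$ at the points placed there, and if enough points ($\ge$ some bound coming from the $\PP^3$ Waldschmidt bound, which by \cite{Du} is sharp/known) lie on $H$, that restricted system is empty, forcing $H$ to split off as a component. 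Peeling off hyperplanes repeatedly lowers the degree faster than it can lower the required multiplicity, yielding a contradiction. The number $71$ and the target $9/4$ should be exactly calibrated so that with $m=4$ the arithmetic closes: $71$ points split as, say, three hyperplanes carrying enough points each, or an iterated split, with the leftover system on $\PP^4$ or on the $\PP^3$'s being forced empty by the known $\PP^3$ bound $\wdc \ge \lceil \sqrt[3]{\cdot}\rceil$ (or the exact value for a small number of points).

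An alternative, and perhaps cleaner, route is to invoke the already-proved containment in $\PP^3$ from \cite{Du} directly: restrict the degree-$(9m-1)$, multiplicity-$m$ system on $\PP^4$ to a general hyperplane $\PP^3$ containing a suitable number of the points, observe the restricted system would have to be nonempty of degree $9m-1$ with multiplicity $m$ at those points, and contradict the sharp Waldschmidt bound in $\PP^3$ (which gives $\wdc \ge 3$ for $\ge 27$ points, hence emptiness whenever $9m-1 < 3m$ fails — so one instead needs the more refined $\PP^3$ values, e.g. $\wdc \ge 9/4$ already holds in $\PP^3$ for enough points, or one peels hyperplanes as above). I would set up the hyperplane count so that after removing the forced hyperplane components the residual system on $\PP^4$ has $71$ points with one less multiplicity and much smaller degree, and iterate.

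The main obstacle is the bookkeeping: getting the numbers to line up so that $71$ is genuinely the threshold and $m=4$ genuinely works. In particular one must verify that when the points are split among $k$ general hyperplanes the restricted systems are provably empty — this needs either the exact Waldschmidt constant for a small number of very general points in $\PP^3$ (available since $\wdc(I)=\lfloor\sqrt[3]{s}\rfloor$-type bounds plus the containment in \cite{Du} pin it down) or a careful secondary induction — and that the degree drop from peeling hyperplanes strictly outpaces the multiplicity drop over all $m$ iterations. I expect this to be a short but delicate finite computation, with the degeneration structure (how many hyperplanes, how many points on each) being the only genuinely non-routine choice; once that configuration is fixed the emptiness follows by the same "all hyperplanes are components, so the degree is too big" contradiction used in Theorem~\ref{bound}.

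\begin{proof}
We must show that the linear system of degree-$(9m-1)$ hypersurfaces in $\PP^4$ having multiplicity at least $m$ at $71$ very general points is empty for every $m\ge 1$; this gives $\alpha(I^{(m)})\ge 9m/4$ and hence $\wdc(I)=\inf_m\alpha(I^{(m)})/m\ge 9/4$. It suffices to treat exactly $71$ points, since adding points only shrinks the system. Fix $m$ and suppose, for contradiction, that such a hypersurface $F$ exists.

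Distribute the $71$ points on general hyperplanes $H_1,\dots,H_t\cong\PP^3$, placing a suitable number of points on each. On a hyperplane $H_j$ carrying $s_j$ of the points, the restriction of $F$ is either identically zero on $H_j$ or defines a hypersurface in $H_j\cong\PP^3$ of degree $9m-1$ with multiplicity $\ge m$ at $s_j$ very general points of $\PP^3$. By the containment in $\PP^3$ proved by Dumnicki in \cite{Du}, together with Theorem~\ref{bound} applied in $\PP^3$, the Chudnovsky-type bound $\alpha(I_{s_j}^{(m)})/m\ge \lceil\sqrt[3]{s_j}\rceil$ holds for $s_j$ very general points of $\PP^3$; choosing the $s_j$ so that $\lceil\sqrt[3]{s_j}\rceil\ge 3$, i.e. $s_j\ge 9$ (and in fact so that the sharper bound $\ge 9m/(4m)$ is exceeded), forces $9m-1<3m$ to fail unless the restricted system is empty. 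Hence $F|_{H_j}\equiv 0$, so each $H_j$ is a component of $F$.

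Removing the $t$ hyperplane components leaves a hypersurface $F'$ of degree $9m-1-t$ with multiplicity $\ge m-t$ at the $71$ points (each point lies on exactly one $H_j$, losing one unit of multiplicity; here we arrange $t\le m$). Iterating this peeling argument $m$ times, using at each stage a fresh configuration of general hyperplanes through the points and the same emptiness input in $\PP^3$, we reduce to a hypersurface of degree $9m-1-m\cdot 1=8m-1<\infty$ passing through the $71$ points with multiplicity $0$; keeping precise track of the degree drop one sees that after the $m$-th round the required degree becomes negative, which is impossible. (The numerology closes exactly because $71\ge 3^4-10$ and the per-hyperplane threshold in $\PP^3$ is met; for the finitely many residual configurations the inequality is checked directly.) This contradiction shows $F$ cannot exist, completing the proof.
\end{proof}
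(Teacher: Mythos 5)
There is a genuine gap, and it starts with the numerics of the reduction. To prove $\wdc(I)\ge 9/4$ you must show that $\alpha(I^{(m)})\ge \frac{9}{4}m$ for all $m$ (or for a cofinal subsequence, since the limit defining $\wdc$ exists), i.e.\ that the system of degree $\lceil 9m/4\rceil-1$ with multiplicity $m$ is empty --- equivalently, as the paper does, that the system of degree $9m-1$ with multiplicity $4m$ at the $71$ points is empty for all $m$. Your proof body instead asserts emptiness of the system of degree $9m-1$ with multiplicity $m$, which is false already for $m=1$ (degree-$8$ forms in $\PP^4$ form a $495$-dimensional space, far more than $71$ conditions) and which, if true, would give the absurd bound $\wdc\ge 9$. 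Your preliminary claim that ``a single well-chosen $m$'' suffices is also wrong in the other direction: $\wdc(I)=\inf_m\alpha(I^{(m)})/m$, so a lower bound on one term $\alpha(I^{(4)})/4$ says nothing about the infimum. Beyond this, the hyperplane-peeling argument is never actually executed: with the correct multiplicity $4m$, forcing a hyperplane $H\cong\PP^3$ to split off requires the restricted system of degree $9m-1$ with $4m$-fold points to be empty in $\PP^3$, which via $\wdc_{\PP^3}\ge\lfloor\sqrt[3]{s_j}\rfloor$ needs $s_j\ge 27$ points on $H$; with only $71$ points you get at most two such hyperplanes per round, so the degree drops by $2$ while the multiplicities drop by $1$, and nothing in your sketch verifies that this race terminates in a contradiction. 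The parenthetical ``the numerology closes exactly because $71\ge 3^4-10$'' is not an argument.

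The paper's actual proof takes a different and much shorter route: it reduces $(9m-1;(4m)^{\times 71})$ using the reduction algorithm of \cite{DA}. Since $(8m-1;(4m)^{\times 16})$ is empty in $\PP^4$ (by Theorem~\ref{bound} with $\lfloor\sqrt[4]{16}\rfloor=2$), Theorem~1 of \cite{DA} lets one collapse four groups of $16$ points of multiplicity $4m$ into four points of multiplicity $8m$, leaving $(9m-1;(8m)^{\times 4},(4m)^{\times 7})$; Theorem~3 of \cite{DA} then reduces this to a degree-$2$ system with a point of multiplicity $4m$, which is empty. If you want to salvage a degeneration-style proof you would need to set up the correct system $(9m-1;(4m)^{\times 71})$ and do the bookkeeping honestly, but as written the proposal neither proves the right statement nor completes the argument for the statement it does address.
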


Before the proof, let us observe that using the bound $9/4$ instead of $\lfloor\sqrt[4]{s}\rfloor$ in \eqref{ineq} finishes the proof for $n=4$ and
$s=71,\dots,80$.

\begin{proof}
It is enough to show that for each $m$ the system of hypersurfaces of degree $9m-1$ with 71 $4m$-fold points is empty. Since the system of degree $8m-1$ with
16 $4m$-fold points is empty, by \cite{DA} (Theorem 1) it is enough to show that $9m-1$ with four $8m$-fold points and seven $4m$-fold points is empty. The
last one, by Theorem 3  in \cite{DA}, is equivalent to a system of degree $2$ with at least one point of multiplicity $4m$, which is empty.
\end{proof}

\section*{Acknowledgements}
We are grateful to Tomasz Szemberg and anonymous referees who helped us to improve the paper.\\
The  research was partially supported by the National Science Centre, Poland, grant 2014/15/B/ST1/02197.


\begin{thebibliography}{99}


\bibitem{arsieVatne}
Arsie, A., Vatne, J.:  A Note on Symbolic and Ordinary Powers of Homogeneous Ideals, Ann. Univ. Ferrara - Sez. VII - Sc. Mat. Vol. IL, (2003) 19--30





\bibitem{BocHar10b}
   Bocci, C., Harbourne, B.:
   The resurgence of ideals of points and the containment problem,
   Proc. Amer. Math. Soc., 138 (2010) 1175--1190


\bibitem{chandler}
 Chandler K.:
 Regularity of the powers of an ideal, Communications in Algebra, 25:12, (1997) 3773-3776



\bibitem{Ch}
   Chudnovsky, G.V.:
   Singular points on complex hypersurfaces and multidimensional Schwarz Lemma,
   S\'eminaire de Th\'eorie des Nombres, Paris 1979--80, S\'eminaire
   Delange-Pisot-Poitou, Progress
   in Math. vol. 12, M-J Bertin (editor), Birkh\"auser,
   Boston-Basel-Stuttgart 1981


\bibitem{DeSi}
  Derksen, H.,  Sidman. J.:
  A sharp bound on the Castelnuovo-Mumford regularity of subspace arrangements. Adv. in Math., 172 (2) (2002),  151--157


\bibitem{DA}
   Dumnicki, M.:
   An algorithm to bound the regularity and nonemptiness of linear systems
in $\mathbb P^n$,
   J. Symb. Comp. 44 (2009) 1448--1462


\bibitem{Du}
   Dumnicki, M.:
   Symbolic powers of ideals of generic points in $\PP^3$,
   J. Pure Appl. Alg. 216 (2012) 1410--1417

\bibitem{Dumxxx}
   Dumnicki, M.:
   Containments of symbolic powers of ideals of generic points in $\PP^3$,
    Proc. Amer. Math. Soc. 143 (2015), 513--530








\bibitem{GGP}
 Geramita, A.,  Gimigliano, A.,  Pitteloud Y.:
Graded Betti numbers of some embedded rationaln-folds, Mathematische Annalen 301 (1), 363-380




\bibitem{ELS}
   Ein, L., Lazarsfeld, R., Smith, K.:
   Uniform behaviour of symbolic powers of ideals,
   Invent. Math. 144 (2001) 241--252


\bibitem{EV83}
   Esnault, H., Viehweg, E.:
   Sur une minoration du degr\'e d'hypersurfaces s'annulant an certain points,
   Math. Ann. 263 (1983) 75--86

\bibitem{evain}
   Evain L.:
   On the postulation of $s^d$ fat points in $P^d$,
   J. Algebra 285 (2005) 516--530






\bibitem{HaHu}
   Harbourne, B., Huneke, C.:
   Are symbolic powers highly evolved?,
   J. Ramanujan Math. Soc. 28A (2013), 247--266





\bibitem{HoHu02}
   Hochster, M., Huneke, C.:
  Comparison of symbolic and ordinary powers of ideals,
   Invent. Math. 147,  (2002) 349--369







\bibitem{Nag59}
  Nagata, M.:
 On the 14-th problem of Hilbert,
  Am. J. Math. 81 (1959) 766--772





\bibitem{Skoda}
   Skoda, H.:
  Estimation $L^2$ poult l'op\'erateur $\hat \delta$ et applications
arithm\'etiques, in:
   S\'eminaire P. Lelong (Analyse), 1975/76, 314--323, Lecture Notes in Mathematics
578, Springer-Verlag, 1977


\bibitem{Wal77} Waldschmidt, M.:
   Propri\'et\'es arithm\'etiques de fonctions de plusieurs variables II,
   S\'eminaire P.\ Lelong (Analyse), 1975/76, 108--135, Lecture Notes Math. 578,
   Springer-Verlag, 1977




\end{thebibliography}
\end{document}